\numberwithin{equation}{section}
\theoremstyle{plain}
\newtheorem{thm}[equation]{Theorem}
\newtheorem{cor}[equation]{Corollary}
\newtheorem*{thm*}{Theorem}
\theoremstyle{definition}
\newtheorem{rem}[equation]{Remark}
\begin{document}

\title{A generalization of the 3d distance theorem}

\author{Manish Mishra \and Amy Binny Philip}

\address{Indian Institute of Science Education and Research Pune, Dr. Homi
Bhabha Road, Pasha, Pune 411008, Maharashtra, India}

\email{manish@iiserpune.ac.in\\
 amy@iiserpune.ac.in}
\begin{abstract}
Let $P$ be a positive rational number. Call a function $f:\mathbb{R}\rightarrow\mathbb{R}$ to have \textit{finite gaps property mod $P$} if the following holds: for any positive irrational $\alpha$ and positive integer $M$, when the values of $f(m\alpha)$, $1\leq m\leq M$, are inserted mod $P$ into the interval $[0,P)$ and arranged in increasing order, the number of distinct gaps between successive terms is bounded by a constant
$k_{f}$ which depends only on $f$. In this note, we prove a generalization of the 3d distance theorem of Chung and Graham. As a consequence, we show that a piecewise linear map with rational slopes and having only finitely many non-differentiable points has finite gaps property mod $P$. We also show that if $f$ is distance to the nearest integer function, then it has finite gaps property mod $1$ with $k_f\leq6$.

\end{abstract}

\maketitle

\section{introduction}

The well known three gaps theorem was first observed by H. Steinhaus
and proved independently by V. T. S{\'o}s \cite{sos1957theory, sos1958distribution} and  others \cite{swierczkowski1958successive, sur1958nyi} 
(see \cite{BAASJ17} for a nice summary and recent generalization). The three gaps theorem is a special case ($d=1$) of the following 
more general theorem of Chung and Graham \cite{grahamchung1976}.
\begin{thm*}
[3d distance theorem] Let $\alpha>0$ be an irrational number and
$N_{1},\ldots,N_{d}$ be positive integers. When the fractional parts
of $d$ arithmetic sequences $n\alpha+k_{i}$, $k_{i}\in\mathbb{R},$
$n=1,\ldots,N_{i}$, $1\leq i\leq d$ are inserted into a circle of
unit circumference, the gaps between successive terms takes at most
$3d$ distinct values. 
\end{thm*}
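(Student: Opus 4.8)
The plan is to give a dynamical proof on the circle $\mathbb{T}=\mathbb{R}/\mathbb{Z}$, exploiting that the rotation $T\colon x\mapsto x+\alpha$ \emph{almost} preserves the point configuration. For $1\le i\le d$ set $x_{i,n}=$ image in $\mathbb{T}$ of $k_i+(n+1)\alpha$ for $0\le n\le N_i-1$, so that $S_i:=\{x_{i,0},\dots,x_{i,N_i-1}\}=\{x_{i,0},Tx_{i,0},\dots,T^{N_i-1}x_{i,0}\}$; since $\alpha$ is irrational the $N_i$ points of $S_i$ are automatically distinct. Put $S=\bigcup_i S_i$ and $N=\sum_i N_i$. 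I would first treat the case where the $N$ points of $S$ are pairwise distinct, so that each lies in a unique progression; call $x_{i,0}$ \emph{initial} and $x_{i,N_i-1}$ \emph{terminal} for progression $i$ (at most $d$ of each). For $x\in S$ let $x^{+}$ be its successor in the cyclic order on $S$ and $g(x)$ the length of the arc from $x$ to $x^{+}$; the numbers $g(x)$, $x\in S$, are precisely the gaps in the statement.

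The engine of the proof would be the following local invariance lemma: \emph{if $x$ and $x^{+}$ are both non-terminal and no initial point lies on the open arc $(Tx,Tx^{+})$, then $g(Tx)=g(x)$.} Indeed, $T$ is an orientation-preserving homeomorphism, so it carries the arc $(x,x^{+})$ — which contains no point of $S$ in its interior — to the arc $(Tx,Tx^{+})$, whose endpoints lie in $S$ precisely because $x$ and $x^{+}$ are non-terminal. If some $z\in S$ were interior to $(Tx,Tx^{+})$, then $T^{-1}z\in(x,x^{+})$; a non-initial point $z$ satisfies $T^{-1}z\in S$, which is impossible since $(x,x^{+})$ has no interior point of $S$, so $z$ would be an initial point on $(Tx,Tx^{+})$, contrary to hypothesis. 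Hence $(Tx,Tx^{+})$ is again a gap of $S$, and $g(Tx)=|(Tx,Tx^{+})|=|(x,x^{+})|=g(x)$.

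I would then apply this lemma along each progression. Fix $i$ and consider the sequence $g(x_{i,0}),\dots,g(x_{i,N_i-1})$. For $1\le n\le N_i-1$ the point $x_{i,n-1}$ is non-terminal, so $g(x_{i,n})=g(Tx_{i,n-1})=g(x_{i,n-1})$ unless the index $n-1$ is a \emph{break index}, meaning either (b) the successor $(x_{i,n-1})^{+}$ is terminal, or (c) some initial point lies on $(Tx_{i,n-1},T(x_{i,n-1})^{+})$. Thus $g$ takes at most $1+c_i$ distinct values on $S_i$, where $c_i$ is the number of break indices for progression $i$. Now sum over $i$: each terminal point equals $y^{+}$ for exactly one $y\in S$, so there are at most $d$ break indices of type (b) in all; and for each initial point $p$ the condition $p\in(Tx,Tx^{+})$ is equivalent to $T^{-1}p\in(x,x^{+})$, and $T^{-1}p$ lies in at most one of the arcs $(x,x^{+})$, so there are at most $d$ break indices of type (c) in all. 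Therefore the total number of distinct gaps is at most $\sum_{i=1}^{d}(1+c_i)=d+\sum_i c_i\le d+2d=3d$.

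The only delicate points are two. First, one must pin down the exceptional set in the lemma exactly: in particular terminal points should \emph{not} be counted as break indices — a progression's final gap-value has no successor value to disagree with — which is what holds the per-progression count at $c_i$, the total at $2d$, and the final bound at $3d$ rather than $4d$. Second, there is the reduction to distinct points, which I expect to be the main nuisance rather than a genuine obstacle: collisions occur only between points of different progressions, and one can either perturb the constants $k_i$ to separate all $N$ points, apply the bound, and let the perturbation tend to zero (every gap of the limiting configuration being a limit of gaps of the perturbed configurations, whose gap-sets have size $\le 3d$), or argue directly with the merged configuration, observing that a merged point still satisfies ``non-terminal in some progression containing it $\Rightarrow$ its $T$-image lies in $S$'' and redoing the type-(b) and type-(c) counts with that reading.
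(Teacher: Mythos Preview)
Your proof is correct and is essentially the same argument the paper uses (an adaptation of Liang's proof): translate each gap interval by $\alpha$ and count the exceptional gaps where this fails to produce another gap interval, attributing the failures to terminal points (your type~(b), the paper's ``starting points'') and to initial points landing inside the translated arc (your type~(c), the paper's ``finish points''). The only cosmetic difference is bookkeeping---you follow each orbit and count break indices to get $d+d+d$, while the paper counts ``rigid'' gap intervals directly to get $2d+d$---but the mechanism and the bound $3d$ are identical.
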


What makes this theorem surprising is the fact that the fractional parts of the sequence $n\alpha$ are known to be uniformly 
distributed in the interval $[0,1)$.

For a positive rational $\lambda$, let $I_{\lambda}$ be the discrete
set $\{n\lambda\mid n\in\mathbb{Z}\}$. For $x\in\mathbb{R}$, define $\lambda$-floor $\lfloor{x}{\rfloor}_{\lambda}$ and $\lambda$-roof $\lceil{x}{\rceil}_{\lambda}$ as:
\[
\lfloor{x}{\rfloor}_{\lambda}=\mathrm{max}\{r\in I_{\lambda}\mid r\leq x\}, \\
\lceil{x}{\rceil}_{\lambda}=\mathrm{min}\{r\in I_{\lambda}\mid r\geq x\}.
\]
 Define $\lambda$-fractional part functions $\{-\}'_\lambda$ and $\{-\}''_\lambda$ as: 
\[
\{x\}_{\lambda}'=
\begin{cases}
x-\lfloor{x}{\rfloor}_{\lambda} & \text{if } x\geq 0\\
x -\lceil{x}{\rceil}_{\lambda} & \text{if } x<0,

\end{cases}
\] 
\[
\{x\}_{\lambda}''=x-\lfloor{x}{\rfloor}_{\lambda}.
\]
Define $\{x\}'_\infty=x$. We write $\{x\}''_{1}$ as $\{x\}$.  

Choose a $\lambda$-fractional part function $\{-\}'_\lambda$ or $\{-\}''_\lambda$ and denote it by $\{-\}_{\lambda}$. We prove the following generalization of the 3d distance theorem.
\begin{thm}
\label{thm1}Let $\alpha>0$ be an irrational number, $N_{1},\ldots,N_{d}$
be positive integers and $n_{1},\ldots,n_{d}$ be non-negative integers such that
$n_{i}\leq N_{i}$, $1\leq i\leq d$. Write $N=\sum_{i=1}^{d}(N_{i}-n_{i})$. Consider the linear
maps $\hat{f}_{i}:x\in\mathbb{R}\mapsto\frac{p_{i}}{q}x+k_{i}\in\mathbb{R}$,
$1\leq i\leq d$, where $0\neq p_{i}\in\mathbb{Z}$, $q\in\mathbb{Z}_{>0}$
and $k_{i}\in\mathbb{R}$. Fix $P$ to be a positive rational and let $\lambda$ be any positive integer multiple of $Pq$. Define $f_{i}:\mathbb{R}\rightarrow\mathbb{R}$ by $f_{i}(x)=\hat{f}_{i}(\{x\}_{\lambda})$. Insert mod $P$, the values of $f_{i}(m\alpha)$, $n_{i} < m\leq N_{i}$,
$1\leq i\leq d$, in the interval $[0,P)$ to form an increasing sequence
$(b_{n})_{1\leq n\leq N}$. Write $\ell=\mathrm{lcm}(p_{1},\ldots,p_{d})>0$,
$c_{i}:=\ell/p_{i}$ and $c=\sum_{i=1}^{d}|c_{i}|$. Then there are
at most $3c$ distinct values in the set of gaps $g_{m}$ defined
by 
\[
g_{1}=P+b_{1}-b_{N},\qquad g_{m}=b_{m}-b_{m-1},\qquad m=2,\ldots,N.
\]
\end{thm}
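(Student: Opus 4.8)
The plan is to reduce Theorem~\ref{thm1} to the $3d$ distance theorem of Chung and Graham quoted above, applied with $c$ arithmetic progressions in place of $d$.

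First I would show that the $\lambda$-fractional part disappears upon reduction mod $P$. For any real $x$, both chosen versions of the $\lambda$-fractional part satisfy $\{x\}_\lambda \in x + \lambda\mathbb{Z}$ directly from their definitions. Writing $\lambda = N_0 Pq$ with $N_0 \in \mathbb{Z}_{>0}$ and $\{m\alpha\}_\lambda = m\alpha - r_m$ with $r_m \in \lambda\mathbb{Z}$, one computes
\[
f_i(m\alpha) = \tfrac{p_i}{q}\{m\alpha\}_\lambda + k_i = \tfrac{p_i}{q}m\alpha + k_i - p_i N_0\,\tfrac{r_m}{\lambda}\,P \equiv \hat f_i(m\alpha) \pmod P,
\]
since $r_m/\lambda \in \mathbb{Z}$. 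As the sequence $(b_n)$ and the gaps $g_m$ depend only on the residues mod $P$ of the inserted values, I may replace each $f_i$ by the linear map $\hat f_i$ throughout.

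Next I would rescale by $1/P$. The order isomorphism $[0,P) \to [0,1)$, $y \mapsto y/P$, multiplies all gaps by $1/P$ and intertwines reduction mod $P$ with reduction mod $1$, so it suffices to bound the number of distinct gaps when the values $\tfrac{1}{P}\hat f_i(m\alpha) = \tfrac{p_i}{Pq}m\alpha + \tfrac{k_i}{P}$, $n_i < m \le N_i$, $1 \le i \le d$, are inserted mod $1$ into $[0,1)$. Then I would put $\beta = \tfrac{\ell}{Pq}\alpha$, a positive irrational (as $\tfrac{\ell}{Pq} \in \mathbb{Q}\setminus\{0\}$ and $\alpha$ is irrational), and observe that $\tfrac{p_i}{Pq}\alpha = \tfrac{p_i}{\ell}\beta = \beta/c_i$, so the $i$-th family of points is $\{\tfrac{m}{c_i}\beta + \tfrac{k_i}{P} : n_i < m \le N_i\}$ mod $1$.

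The key manipulation is to break each such family into arithmetic progressions all having the \emph{same} common difference $\beta$. Fixing $i$ and setting $e_i = |c_i| \ge 1$, I would split the range $n_i < m \le N_i$ by the residue $s \in \{0,\dots,e_i-1\}$ of $m$ modulo $e_i$; writing $m = e_i t + s$, the values of $t$ that occur form a (possibly empty) interval $T_{i,s}$ of consecutive integers, and $\tfrac{m}{c_i}\beta = \mathrm{sgn}(c_i)\,t\beta + \tfrac{s}{c_i}\beta$. After reindexing $\mathrm{sgn}(c_i)\,t$ so that it runs over $\{1,\dots,|T_{i,s}|\}$ and absorbing the constant $\tfrac{s}{c_i}\beta + \tfrac{k_i}{P}$, each nonempty residue class becomes an arithmetic sequence $n\beta + (\text{const})$, $n = 1,\dots,|T_{i,s}|$. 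Summing over $i$ and $s$, the entire point set on the unit circle is a union of at most $\sum_{i=1}^d e_i = \sum_{i=1}^d |c_i| = c$ such sequences, so the $3d$ distance theorem (with $c$ progressions in place of the $d$ there) yields at most $3c$ distinct gaps on the unit circle, hence at most $3c$ distinct values among the $g_m$.

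I expect the main obstacle to be conceptual rather than technical: spotting that divisibility of $\lambda$ by $Pq$ collapses $f_i$ to the linear map $\hat f_i$ modulo $P$, which is precisely what turns the statement into a question about a finite union of arithmetic progressions. The step requiring the most care is the residue-class decomposition, where one must track the sign of $c_i$ and check that, after the substitution $m = e_i t + s$, the surviving indices genuinely form a block of consecutive integers, so that the Chung--Graham theorem applies verbatim.
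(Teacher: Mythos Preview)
Your argument is correct, but it is a genuinely different route from the paper's. The paper does \emph{not} reduce to the Chung--Graham $3d$ distance theorem as a black box; instead it adapts Liang's proof of that theorem directly to the present setting. Concretely, the paper translates each gap interval by $\ell_0\alpha$ with $\ell_0=\ell/q$, calls an interval \emph{rigid} if this translate is not again a gap interval, and then bounds the number of rigid intervals by $2c$ (from ``starting points'', where a translated endpoint leaves the index range) plus $c$ (from ``finish points'', where a new internal point appears). Since translation by $\ell_0\alpha$ preserves lengths and cannot create loops, every gap length is realized by some rigid interval, giving at most $3c$ distinct gap values.

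Your reduction is shorter and makes transparent that Theorem~\ref{thm1} is, up to the mod-$P$ collapse and a residue-class splitting, literally Chung--Graham with $c$ progressions; the only delicate steps are exactly the two you flag (the $\lambda$-divisibility forcing $f_i\equiv\hat f_i$ mod $P$, and the reindexing of $\mathrm{sgn}(c_i)\,t$ over a block of consecutive integers), and both go through as you describe. The paper's self-contained approach, by contrast, names the explicit starting and finish points; this extra structure is not needed for the bare bound $3c$, but the paper reuses it in the proof of Corollary~\ref{corr2} and it is the natural starting point if one wants to sharpen the constant in special cases.
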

Note that Theorem \ref{thm1} allows the possibility of some points
to coincide. The ordering of coincidental points is defined in Section
\ref{sec1}.

Let $||\cdot||:\mathbb{R}\rightarrow[0,1/2]$ denote the distance
to the nearest integer function. By definition
\[
||x||=\mathrm{min}(\{|x|\},1-\{|x|\}).
\]
As a special case
of Theorem \ref{thm1}, we obtain the following result which was proved in 
\cite{henk} using different methods. 

\begin{cor}
\label{corr2}Let $\alpha>0$ be an irrational number and $M>1$ be
an integer. When the values $||n\alpha||$, $1\leq n\leq M$, are
arranged in ascending order in the interval $[0,\frac{1}{2}]$, the
gaps between successive terms may take at most $6$ distinct values. 
\end{cor}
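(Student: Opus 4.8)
The plan is to obtain Corollary~\ref{corr2} as an instance of Theorem~\ref{thm1} with $d=2$, feeding in two ``mirror'' copies of the data $(m\alpha)_{1\le m\le M}$: one copy produces the points $\{m\alpha\}$ and the other the reflected points $1-\{m\alpha\}$. The inserted configuration is then symmetric about $1/2$, and the increasing arrangement of the numbers $\|m\alpha\|$ is precisely its lower half, so the gap bound is inherited from Theorem~\ref{thm1}.

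Concretely, I would apply Theorem~\ref{thm1} with $P=1$, $q=1$, $\lambda=1$ (a positive integer multiple of $Pq=1$), $\{-\}_{\lambda}=\{-\}''_{1}=\{-\}$, $d=2$, $N_{1}=N_{2}=M$, $n_{1}=n_{2}=0$ (so $N=2M$), $\hat f_{1}(x)=x$ (i.e.\ $p_{1}=1$, $k_{1}=0$) and $\hat f_{2}(x)=-x$ (i.e.\ $p_{2}=-1$, $k_{2}=0$). Then $f_{1}(m\alpha)=\{m\alpha\}$ and, inserting mod $1$, $f_{2}(m\alpha)=-\{m\alpha\}\equiv 1-\{m\alpha\}\pmod 1$; since $\alpha$ is irrational these $2M$ numbers all lie in the open interval $(0,1)$. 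Here $\ell=\mathrm{lcm}(1,1)=1$, $c_{1}=1$, $c_{2}=-1$, and $c=|c_{1}|+|c_{2}|=2$, so Theorem~\ref{thm1} yields an increasing sequence $b_{1}<\dots<b_{2M}$ in $[0,1)$ whose gaps $g_{1}=1+b_{1}-b_{2M}$ and $g_{m}=b_{m}-b_{m-1}$ for $2\le m\le 2M$ take at most $3c=6$ distinct values.

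The next step is the symmetry/folding argument. The inserted set $\{\,\{m\alpha\},\,1-\{m\alpha\}:1\le m\le M\,\}$ is invariant under $x\mapsto 1-x$, and by irrationality of $\alpha$ it consists of $2M$ distinct points none of which equals $1/2$; hence $b_{j}+b_{2M+1-j}=1$ for all $j$, and exactly $M$ of the $b_{j}$ lie in $(0,1/2)$, namely $b_{1}<\dots<b_{M}$. Using again that $\{m\alpha\}=\{m'\alpha\}$ and $\{m\alpha\}+\{m'\alpha\}=1$ are both impossible for $m,m'\in\{1,\dots,M\}$, one checks that $m\mapsto\|m\alpha\|$ is a bijection of $\{1,\dots,M\}$ onto $\{b_{1},\dots,b_{M}\}$, so the values $\|m\alpha\|$, $1\le m\le M$, arranged in increasing order in $[0,1/2]$ are exactly $b_{1}<b_{2}<\dots<b_{M}$. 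Consequently the gaps between successive terms of this sequence are $b_{2}-b_{1},\dots,b_{M}-b_{M-1}$, i.e.\ $g_{2},\dots,g_{M}$, a subset of the at most $6$ values taken by $g_{1},\dots,g_{2M}$, which proves the bound.

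I expect the step needing the most care to be this folding identification: checking that the ordered $\|m\alpha\|$-list really is the lower half of the symmetric $2M$-point configuration and that no two inserted points coincide (so that the coincidence-ordering convention of Section~\ref{sec1} plays no role here), together with pinning down the endpoint convention for gaps in $[0,1/2]$ --- with ``gaps $=$ successive differences'' the argument above is complete, while if one additionally wishes to record the boundary gaps $b_{1}-0$ and $1/2-b_{M}$, the symmetry $b_{2M}=1-b_{1}$, $b_{M+1}=1-b_{M}$ shows these equal $g_{1}/2$ and $g_{M+1}/2$, and the bookkeeping would have to be done with a little more care.
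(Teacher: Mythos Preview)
Your proposal is correct and follows essentially the same route as the paper: apply Theorem~\ref{thm1} with $d=2$, $P=\lambda=q=1$, $p_{1}=1$, $p_{2}=-1$, $N_{1}=N_{2}=M$, $n_{1}=n_{2}=0$ (the paper takes $k_{2}=1$ rather than $0$, which is the same mod $1$), obtaining $c=2$ and hence at most $6$ gap sizes, and then observe that the ordered sequence $(\|m\alpha\|)$ is exactly the lower half $b_{1}<\dots<b_{M}$ of the symmetric $2M$-point configuration, so its successive differences are among the gaps $g_{2},\dots,g_{M}$. Your folding argument is a slightly more explicit version of the paper's one-line observation that $\mathbb{B}\setminus\mathbb{D}\subset(1/2,1)$ forces consecutive points of $\mathbb{D}$ to be consecutive in $\mathbb{B}$.
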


Our proof of Corollary \ref{corr2} is significantly shorter than the proof in 
\cite{henk}. However, the bound obtained in loc. cit. is effective. 

\begin{cor}
\label{thm:2}Let $f:\mathbb{R}\rightarrow\mathbb{R}$ be a 
piecewise linear map with rational slopes and having only finitely many non-differentiable points. Let $\alpha>0$
be an irrational number and $M>1$ be an integer. For any positive rational $P$, when the values
$f(m\alpha)$, $1\leq m\leq M$, are inserted $\text{mod } P$ in $[0,P)$ and arranged in ascending order,
the gaps between successive terms may take at most $k_{f}$ distinct
values, where $k_{f}$ is a constant which depends only on $f$. 
\end{cor}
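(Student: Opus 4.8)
The plan is to reduce Corollary~\ref{thm:2} to Theorem~\ref{thm1}. Let $a_1<\dots<a_r$ be the non-differentiable points of $f$ and set $a_0=-\infty$, $a_{r+1}=+\infty$. On each open interval $(a_j,a_{j+1})$, $0\le j\le r$, the map $f$ coincides with an affine function that I will write as $\hat f_j(x)=\frac{p_j}{q}x+k_j$, where a single $q\in\mathbb{Z}_{>0}$ (a common denominator of all the slopes of $f$, hence depending only on $f$) works for every $j$, while $p_j\in\mathbb{Z}$ and $k_j\in\mathbb{R}$; note $p_j=0$ exactly on the intervals where $f$ is constant. Given the positive rational $P$, I fix a positive integer $\lambda$ that is a multiple of $Pq$ and put $\tilde f_j(x)=\hat f_j(\{x\}_\lambda)$, which is precisely the kind of map appearing in Theorem~\ref{thm1}.

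Two elementary remarks bring the situation into the form of that theorem. First, for every real $x>0$ and every $j$ we have $\tilde f_j(x)\equiv\hat f_j(x)\pmod P$: writing $\{x\}_\lambda=x-n\lambda$ with $n\in\mathbb{Z}$ gives $\tilde f_j(x)-\hat f_j(x)=-\frac{p_j}{q}n\lambda$, and this is an integer multiple of $P$ because $\lambda$ is an integer multiple of $Pq$. Hence $\tilde f_j(m\alpha)$ and $\hat f_j(m\alpha)$ have the same reduction in $[0,P)$ for every $m\ge 1$. Second, because $m\mapsto m\alpha$ is strictly increasing, for each $j$ the set $B_j:=\{\,m:\ 1\le m\le M,\ m\alpha\in(a_j,a_{j+1})\,\}$ is a block of consecutive integers, say $B_j=\{n_j+1,\dots,N_j\}$; and together with the at most $r$ indices $m$ for which $m\alpha$ equals one of the $a_i$, these blocks exhaust $\{1,\dots,M\}$. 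For $m\in B_j$ one has $f(m\alpha)=\hat f_j(m\alpha)\equiv\tilde f_j(m\alpha)\pmod P$, so the reductions in $[0,P)$ of the values $f(m\alpha)$ coming from such $m$ are exactly those produced by the maps $\tilde f_j$ on the blocks $B_j$.

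Now I would discard the empty blocks and those $B_j$ for which $p_j=0$, and apply Theorem~\ref{thm1} to the remaining family of maps $\tilde f_j$ with the ranges $n_j<m\le N_j$. It yields that the values $f(m\alpha)\bmod P$ arising from the non-constant pieces, listed in increasing order around $[0,P)$, have at most $3c$ distinct successive gaps, where $c=\sum_j|\ell/p_j|$ with $\ell=\mathrm{lcm}$ of the relevant $p_j$'s — a quantity determined by the slopes of $f$ alone. It then remains to reinsert the finitely many leftover values of $f(m\alpha)$: each constant piece contributes a single value $k_j\bmod P$ (possibly with multiplicity) and each of the at most $r$ breakpoint-hitting indices contributes a single value $f(a_i)$. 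Since inserting one point into a finite circular arrangement of ordered points splits a single gap into two, and a coincident point merely creates a gap of length $0$, each such insertion raises the number of distinct gap values by at most $2$; as there are only boundedly many of them in terms of $f$, the whole gap set has at most $k_f$ distinct values for a constant $k_f$ depending only on $f$, and in particular not on $M$, $\alpha$, or $P$ (both $c$ and $r$ being determined by $f$). The main obstacle is precisely this last bit of bookkeeping: constant pieces of $f$ lie outside the hypotheses of Theorem~\ref{thm1}, which forbids zero slope, and $m\alpha$ may occasionally land exactly on a breakpoint of $f$; but each contributes only a bounded number of exceptional points, so they merely enlarge the constant, and everything else is a direct translation of the data $(f,\alpha,M,P)$ into the input of Theorem~\ref{thm1}.
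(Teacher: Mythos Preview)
Your approach is essentially the paper's: partition the domain into the intervals on which $f$ is affine and feed the resulting blocks $n_j<m\le N_j$ into Theorem~\ref{thm1}. The only differences are that the paper invokes Theorem~\ref{thm1} with $\{-\}_\lambda=\{-\}'_\infty$ (the identity), which sidesteps your mod-$P$ verification, whereas you in turn are more careful about two edge cases the paper passes over---constant pieces of $f$ (where $p_j=0$, excluded by the hypotheses of Theorem~\ref{thm1}) and the possibility that some $m\alpha$ lands exactly on a breakpoint.
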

Our proof of Theorem \ref{thm1} is an adaptation of the elegant proof 
of the $3d$ distance Theorem by Liang \cite{liang1979}.

\section{\label{sec1}Proof of Theorem \ref{thm1}}

\begin{proof}For $1\leq i\leq d$, let $\mathcal{B}_{i}$ be the set of all triples
$\beta_{im}=(\gamma_{im},i,m)\in[0,P)\times\mathbb{Z}\times\mathbb{Z}$
where $\gamma_{im}\cong f_{i}(m\alpha)\text{ mod }P$,
$n_i\leq m\leq N_{i}$. Write $\mathbb{B}=\bigcup_{i=1}^{d}\mathcal{B}_{i}$.
We give a strict ordering $\prec$ on $\mathbb{B}$ by declaring $\beta_{im}\prec\beta_{jn},$
iff 
\[
\text{\ensuremath{\gamma}}_{im}<\gamma_{jn}\qquad\text{or}\qquad\gamma_{im}=\gamma_{jn}\text{ and }[i<j\text{ or }(i=j\text{ and }m<n)].
\]
 Arrange the elements of $\mathbb{B}$ in a strictly increasing sequence
$(b_{n})_{1\leq n\leq N}$ with this ordering. Applying arithmetic
modulo $P$, we identify $P$ with $0$ and consider $\{\gamma_{im}\mid n_i\leq m\leq N_i, 1\leq i \leq d\} $
as living in this circle $[0,P]$. This makes the ordering on $\mathbb{B}$
a cyclic ordering, which we again denote by $\prec$. Thus $b_{N}$
and $b_{1}$ are consecutive in this cyclic ordering. To simplify notation, will often abuse notation and write $\beta_{im}$ when we mean $\gamma_{im}$.

A \textit{gap interval} is an interval in the circle $[0,P]$ of the form $[\beta_{in},\beta_{jm}]$
where $\beta_{in}$, $\beta_{jm}$ are consecutive points of $\mathbb{B}$
in the cyclic ordering $\prec$. Write $\ell_{0}=\ell/q$. A gap interval
is \textit{rigid} if translating a gap interval by $\ell_{0}\alpha$
does not produce a gap interval. Observe that gap intervals cannot
loop upon successive translations by $\ell_{0}\alpha$. To see this,
suppose $s$ is a positive integer such that translation by $s\ell_{0}\alpha$
maps $[\beta_{in},\beta_{jm}]$ to itself. Then either $\beta_{in}+s\ell_{0}\alpha=\beta_{in}$
and $\beta_{jm}+s\ell_{0}\alpha=\beta_{jm}$, or $\beta_{in}+s\ell_{0}\alpha=\beta_{jm}$
and $\beta_{jm}+s\ell_{0}\alpha=\beta_{in}$. Either of these cases
contradicts the irrationality of $\alpha$. 

Now, a gap interval $I$ is rigid if upon translation by $\ell_{0}\alpha$
it produces an interval $J$ for which one of the following holds:

\begin{enumerate}

\item[(i)]At least one of the end points of $J$ is not in $\mathbb{B}$.

\item[(ii)] The translated interval $J$ has endpoints in $\mathbb{B}$
but they are not consecutive.

\end{enumerate}

For case (i), let $\beta_{in}$ be an end point of $I$ such that
$\beta_{in}+\ell_{0}\alpha=\beta_{i(n+c_{i})}\notin\mathbb{B}$. Then
in particular, $\beta_{i(n+c_{i})}\notin\mathcal{B}_{i}$. If $c_{i}>0$,
then $\beta_{i(n+c_{i})}\not\notin\mathcal{B}_{i}$ iff $n+c_{i}>N_{i}$.
Then, $\beta_{in}$ will be in the set
\[
S_{i}=\{\beta_{im}\mid N_{i}-c_{i}+1\leq m\leq N_{i}\}.
\]

If $c_{i}<0$, then $\beta_{i(n+c_{i})}\notin\mathcal{B}_{i}$ iff
$n+c_{i}\leq n_i$. Then $\beta_{in}$ will be in the set 
\[
T_{i}=\{\beta_{im}\mid1+n_i\leq m\leq-c_{i}+n_i\}.
\]

Call the elements of $S_{i}$ and $T_{i}$ to be \textit{starting
points}. Then for each $i$, the starting points have cardinality
$|c_{i}|$. Since each starting point is the boundary of at most two
gap intervals, case (i) contributes at most $2\sum_{i=1}^{d}|c_{i}|=2c$
rigid intervals. 

For case (ii), let $\beta_{kp}\in\mathbb{B}$ be an internal point
of $J$. Then $\beta_{k(p-c_{k})}$ is an internal point of $I$.
Since $I$ is a gap interval, this implies that $\beta_{k(p-c_{k})}\notin\mathbb{B}$.
In particular $\beta_{k(p-c_{k})}\notin\mathcal{B}_{k}$. This implies
that $\beta_{kp}$ belongs to the set 
\[
T_{k}^{\prime}=\{\beta_{km}\mid1+n_k\leq m\leq c_{k}+n_k\}
\]
or 
\[
S_{k}^{\prime}=\{\beta_{km}\mid N_{k}+c_{k}+1\leq m\leq N_{k}\}
\]
 according as $c_{k}>0$ or $c_{k}<0$. Call the elements of $S_{i}^{\prime}$
and $T_{i}^{\prime}$ to be \textit{finish points}. Then for each
$i$, the finish points have cardinality at most $|c_{i}|$. Thus
case (ii) contributes at most $\sum_{i=1}^{d}|c_{k}|=c$ rigid intervals. 

We have shown that there can be at most $3c$ distinct rigid intervals
and consequently at most $3c$ gap interval sizes. This completes
the proof. \end{proof}

\section{Proof of Corollaries \ref{corr2} and \ref{thm:2}}
\begin{proof}[Proof of Corollary \ref{corr2}]We retain the notations
of Theorem \ref{thm1} and its proof in Section \ref{sec1}. Put $d=2$, $q=1$
$p_{1}=1$, $p_{2}=-1$, $k_{1}=0$ , $k_{2}=1$, $N_{1}=N_{2}=M$, $P=1=\lambda$ and $\{-\}_\lambda=\{-\}''_\lambda$.
Then $c=2$, the starting points are $\{M\alpha\}$ and $1-\{\alpha\}$,
and the finish points are $1-\{M\alpha\}$ and $\{\alpha\}$. Now
write $\mathbb{D}\subset\mathbb{B}$ for the set of points $\{||m\alpha||\mid1\leq m\leq N\}$.
Since $\alpha$ is irrational, the points of $\mathbb{B}$ are all
distinct. Arrange the points in $\mathbb{D}$ in usual increasing
order. Since the ordering $\prec$ on $\mathbb{B}$ is the usual order on 
the circle $[0,1]$, and since $\mathbb{B} {\backslash} \mathbb{D}\subset (\frac{1}{2},1)$, 
it follows that if $u,v$ are consecutive points of $\mathbb{D}$, then it
they are also consecutive points of $\mathbb{B}$. Consequently, it 
follows from Theorem \ref{thm1} that the number of distinct gap 
values in $\mathbb{D}$ is at most
$3c=6$. 

\end{proof}
\begin{rem}
When $\alpha$ is a positive cube root of $15$, we get four distinct
gap sizes: 0.$000612999$, $0.006205886$, $0.006818885$, $0.007125385$.
Henk Don \cite{henk} has shown that the bound is precisely $4$. 
\end{rem}

\begin{proof}[Proof of Corollary \ref{thm:2}] 
Let $\coprod_{i=1}^{d}I_{i}$ be  a partition of the interval
$[0,M\alpha]$ into smallest possible number of connected parts such that $f| I_{i}=\hat{f}_{i}| I_{i}$
for some linear functions $\hat{f}_{i}:x\in\mathbb{R}\mapsto\frac{p_{i}}{q}x+k_{i}\in\mathbb{R}$,
$1\leq i\leq d$, where $0\neq p_{i}\in\mathbb{Z}$, $q\in\mathbb{Z}_{>0}$
and $k_{i}\in\mathbb{R}$. Let $n_i\leq N_i$ be uniquely defined integers such that 
$m\alpha \in I_i$ iff $n_i <m\leq N_i$, $1\leq i \leq d$. The result then follows from Theorem \ref{thm1}
by putting $N=M$ and $\{-\}_\lambda=\{-\}'_\infty$.

\end{proof}

\section{Acknoledgement}

The authors would like to thank Deepa Sahchari for
helpful discussions  and Tian An Wong for pointing out the reference \cite{henk}. They would especially like to thank the anonymous referee for pointing out a serious error in an earlier draft of this article because of which the statements of Theorem \ref{thm1} and Corollary \ref{thm:2} had to be modified. 

\begin{bibdiv}
\begin{biblist}

\bib{BAASJ17}{article}{
    AUTHOR = {Balog, Antal}
    author={Granville, Andrew}
    author={Solymosi, Jozsef}
     TITLE = {Gaps between fractional parts, and additive combinatorics},
   JOURNAL = {Q. J. Math.},
  FJOURNAL = {The Quarterly Journal of Mathematics},
    VOLUME = {68},
      YEAR = {2017},
    NUMBER = {1},
     PAGES = {1--11},
      ISSN = {0033-5606},
   MRCLASS = {11B30 (11K06)},
  MRNUMBER = {3658281},
MRREVIEWER = {Robert F. Tichy},
       DOI = {10.1093/qmath/hav012},
       URL = {https://doi.org/10.1093/qmath/hav012},
}

\bib{grahamchung1976}{article}{   title={On the set of distances determined by the union of arithmetic progressions},   author={Chung, FRK}
author={Graham, RL}
journal={Ars Combinatoria},   volume={1},   pages={57--76},   year={1976},   publisher={Department of Combinatorics and Optimization, University of Waterloo.} }

\bib{henk}{article} {
    AUTHOR = {Don, Henk},
     TITLE = {On the distribution of the distances of multiples of an
              irrational number to the nearest integer},
   JOURNAL = {Acta Arith.},
  FJOURNAL = {Acta Arithmetica},
    VOLUME = {139},
      YEAR = {2009},
    NUMBER = {3},
     PAGES = {253--264},
      ISSN = {0065-1036},
   MRCLASS = {11J71 (11K06)},
  MRNUMBER = {2545929},
MRREVIEWER = {Luis Manuel Navas Vicente},
       DOI = {10.4064/aa139-3-4},
       URL = {https://doi.org/10.4064/aa139-3-4},
}

\bib{liang1979}{article}{title={A short proof of the 3d distance theorem},   author={Liang, Frank M},   journal={Discrete mathematics},   volume={28},   number={3},   pages={325--326},   year={1979},   publisher={Elsevier Science Publishers BV} }

\bib{Jens-Andreas2017}{article}{
    AUTHOR = {Marklof, Jens}
    author={Str\"{o}mbergsson, Andreas},
     TITLE = {The three gap theorem and the space of lattices},
   JOURNAL = {Amer. Math. Monthly},
  FJOURNAL = {American Mathematical Monthly},
    VOLUME = {124},
      YEAR = {2017},
    NUMBER = {8},
     PAGES = {741--745},
      ISSN = {0002-9890},
   MRCLASS = {11K06 (11H06 52C05)},
  MRNUMBER = {3706822},
       DOI = {10.4169/amer.math.monthly.124.8.741},
       URL = {https://doi.org/10.4169/amer.math.monthly.124.8.741},
}


\bib{sos1957theory}{article}{   title={On the theory of diophantine approximations I (on a problem of A. Ostrowski)},   author={S{\'o}s, Vera T},   journal={Acta Mathematica Hungarica},   volume={8},   number={3-4},   pages={461--472},   year={1957},   publisher={Akad{\'e}miai Kiad{\'o}}}

\bib{sos1958distribution}{article}{
  title={On the distribution mod 1 of the sequence {$n\alpha$} }
  author={{S}{\'o}s, Vera T},
  journal={Ann. Univ. {\"E}cient. Budapest E{\"o}tv{\"o}s {\"E}ect. Math,},
  volume={1},
  pages={127--134},
  year={1958}
}

\bib{sur1958nyi}{article}{
  title={\"Uber die Anordnung der Vielfachen einer reellen Zahl mod 1},
  author={Sur\'anyi, J\'anos},
  journal={Ann. Univ. Sci. Budupest E\"otv\"s Sect. Math},
  volume={1},
  PAGES = {107--111}
  year={1958}
}

\bib{swierczkowski1958successive}{article}{   title={On successive settings of an arc on the circumference of a circle},   author={{\'S}wierczkowski, S},   journal={Fundamenta Mathematicae},   volume={46},   pages={187--189},   year={1958},   publisher={Instytut Matematyczny Polskiej Akademii Nauk} }

\end{biblist}
\end{bibdiv}
\end{document}